\newtheorem{theorem}{Theorem}[section]
\newtheorem{conjecture}[theorem]{Conjecture}
\newtheorem{lemma}[theorem]{Lemma}
\newtheorem{corollary}[theorem]{Corollary}
\theoremstyle{definition}
\newtheorem{definition}[theorem]{Definition}
\title{Successive vertex orderings of fully regular graphs}
\author{Lixing Fang \thanks{Institute for Interdisciplinary Information Sciences, Tsinghua University, Beijing, China. Email: 2801511863@qq.com.} \and Hao Huang \thanks{Department of Mathematics, National University of Singapore. Email: huanghao@nus.edu.sg. Research supported in part by a start-up grant at NUS and an MOE Academic Research Fund (AcRF) Tier 1 grant.} \and J\'anos Pach \thanks{R\'enyi Institute, Budapest and IST Austria. Research partially supported by National Research, Development and Innovation Office (NKFIH) grant K-131529 and ERC Advanced Grant ``GeoScape.'' Email: pach@cims.nyu.edu.} \and G\'abor Tardos \thanks{R\'enyi Institute, Budapest. Research partially supported by National Research, Development and Innovation Office (NKFIH) grants K-132696,
SSN-135643, and ERC Advanced Grant ``GeoScape.'' Email: tardos@renyi.hu. } 
\and Junchi Zuo \thanks{Qiuzhen College, Tsinghua University, Beijing, China. Email: zjczzzjc@126.com.}}
\date{}
\begin{document}
\maketitle
\begin{abstract}
A graph $G=(V,E)$ is called {\em fully regular} if for every independent set $I\subset V$, the number of vertices in $V\setminus I$ that are not connected to any element of $I$ depends only on the size of $I$. A linear ordering of the vertices of $G$ is called \emph{successive} if for every $i$, the first $i$ vertices induce a connected subgraph of $G$. We give an explicit formula for the number of successive vertex orderings of a fully regular graph.

As an application of our results, we give alternative proofs of two theorems of Stanley and Gao \& Peng, determining the number of linear \emph{edge} orderings of complete graphs and complete bipartite graphs, respectively, with the property that the first $i$ edges induce a connected subgraph.

As another application, we give a simple product formula for the number of linear orderings of the hyperedges of a complete 3-partite 3-uniform hypergraph such that, for every $i$, the first $i$ hyperedges induce a connected subgraph. We found similar formulas for complete (non-partite) 3-uniform hypergraphs and in another closely related case, but we managed to verify them only when the number of vertices is small.

\end{abstract}

\section{Introduction}

In preparation for a computing contest, the first-named author bumped into the following question. In how many different ways can we arrange the first $mn$ positive integers in an $m\times n$ matrix so that for each entry $i$ different from $1$, there is a smaller entry either in the same row or in the same column? After some computation, he accidentally found the formula
$$(mn)!\cdot\frac{m+n}{\binom{m+n}{m}}$$
for this quantity, which he was able to verify by computer up to $m,n\le 2000$. It turns out that at about the same time, the same question was asked by S. Palcoux on MathOverflow~\cite{Pa18}, which has led to interesting results by Stanley \cite{Stanley} and by Gao and Peng \cite{GaoPeng}. We also posed the question as Problem 4 at the 2019 Mikl\'os Schweitzer Memorial Competition in Hungary, see~\cite{Sch19}.
\smallskip

Many outstanding mathematicians contemplated what makes a mathematical formula beautiful. One of the often proposed criteria was that, even if we somehow hit upon it, there is no easy way to verify it; see, e.g., ~\cite{Tu77}. The above formula seems to meet this criterion.
\smallskip

First, we reformulate the above question in graph-theoretic terms. A \emph{shelling} of a graph $G$ (regarded as a 1-dimensional simplicial complex) is a linear ordering of its edges such that, for every $i$, the first $i$ edges induce a connected subgraph in $G$. Clearly, the number of different ways to enumerate the $mn$ positions of an $m\times n$ matrix with the required properties is equal to the number of shellings of $K_{m,n}$, a complete bipartite graph with $m$ and $n$ vertices in its classes. Stanley and Gao and Peng were the first to establish the following formulas.

\begin{theorem}\label{thm1}
{\bf (i)} {\rm (Stanley, \cite{Stanley})}
The number of shellings of the complete graph $K_n$ on $n\ge2$ vertices is
$$\binom{n}{2}!\cdot \frac{n!}{2 \cdot (2n-3)!!}$$

{\bf (ii)} {\rm(Gao-Peng~\cite{GaoPeng})}
The number of shellings of the complete bipartite graph $K_{m,n}$ with $m\ge1$ and $n\ge 1$ vertices in its classes is
$$(mn)! \cdot \frac{m+n}{\binom{m+n}{m}}.$$
\end{theorem}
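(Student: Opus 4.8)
The plan is to pass to line graphs and count \emph{successive} vertex orderings there. A set of edges of $G$ spans a connected subgraph if and only if the corresponding set of vertices of the line graph $L(G)$ induces a connected subgraph of $L(G)$ (a path in the edge-subgraph is the same thing as a path in $L(G)$), so a shelling of $G$ is exactly a successive vertex ordering of $L(G)$. Thus part (i) asks for the number of successive orderings of the triangular graph $T=L(K_n)$, whose vertices are the $2$-element subsets of an $n$-element set, two being adjacent when they meet; and part (ii) asks for the number of successive orderings of the rook's graph $R=L(K_{m,n})$ on the cells of an $m\times n$ board, two cells adjacent when they share a row or a column. Both are fully regular: the independent sets of $T$ are the matchings of $K_n$, and an independent $k$-set has $\binom{n-2k}{2}$ non-neighbors, while the independent sets of $R$ are the partial permutation matrices, and an independent $k$-set has $(m-k)(n-k)$ non-neighbors. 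So the general fully-regular formula applies; to keep the argument self-contained I derive what is needed directly.

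The key reformulation is local: an ordering is successive if and only if every vertex other than the first has an earlier neighbor, equivalently the ordering has a unique \emph{source}, a vertex preceding all of its neighbors. In any ordering the set $S$ of sources is a nonempty independent set (two adjacent sources would each have to precede the other), and the ordering is successive exactly when $|S|=1$. Writing $b_j$ for the number of orderings with $|S|=j$ and $A_k=\sum_{|I|=k,\ I\text{ indep}} M(I)$, where $M(I)$ is the number of orderings in which every vertex of $I$ is a source, the identity $A_k=\sum_{j\ge k}\binom{j}{k}b_j$ inverts to
\begin{equation*}
N_{\mathrm{succ}}=b_1=\sum_{k\ge 1}(-1)^{k-1}k\,A_k .
\end{equation*}
By the symmetry of $T$ and $R$ (under $S_n$, respectively $S_m\times S_n$), $M(I)$ depends only on $k=|I|$, so $A_k=i_kM_k$, where $i_k$ is the number of independent $k$-sets: $i_k=\binom{n}{2k}(2k-1)!!$ for $T$ and $i_k=\binom{m}{k}\binom{n}{k}k!$ for $R$.

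It remains to compute $M_k$, which I do with the i.i.d.\ uniform model of a random ordering: assigning independent uniform values to the vertices, ``$v$ is a source'' means $v$ attains the minimum value in its closed neighborhood. Fixing an independent $k$-set $I$ and conditioning on the values $x_1,\dots,x_k$ at its vertices, the remaining vertices are independent and each constrained vertex must exceed the one or two values $x_i$ it is adjacent to. Multiplying these constraints and substituting $y_i=1-x_i$ turns $M_k/N!$ into $\int_{[0,1]^k}\prod_{i<j}\min(y_i,y_j)^{c}\prod_i y_i^{a}\,dy$ for suitable constants $a,c$; after ordering $y_1<\cdots<y_k$ the exponent of $y_i$ becomes some $e_i$, and the iterated integral telescopes to $k!\,/\,\prod_{l=1}^{k}(e_1+\cdots+e_l+l)$. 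For $R$ one finds $e_i=m+n-2i$, giving $M_k/N!=\prod_{l=1}^{k}(m+n-l)^{-1}$; for $T$ one finds $e_i=2n-4i$, giving $M_k/N!=\prod_{l=1}^{k}(2n-2l-1)^{-1}$.

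Substituting these into the formula for $N_{\mathrm{succ}}$ reduces each part to a single finite binomial sum, and the entire content of the theorem is then the evaluation of these sums, namely
\begin{equation*}
\sum_{k\ge1}(-1)^{k-1}k\binom{m}{k}\binom{n}{k}k!\,(m+n-k-1)!=m!\,n!
\end{equation*}
for part (ii), and the analogous double-factorial identity $\sum_{k\ge1}(-1)^{k-1}k\binom{n}{2k}(2k-1)!!\,(2n-2k-3)!!=n!/2$ for part (i). I expect this last step to be the main obstacle: the reduction, the source characterization, and the telescoping integral are all clean, but collapsing the alternating sum to the advertised product is the delicate part, to be handled either by recognizing it as a terminating hypergeometric evaluation or by a direct induction or sign-reversing involution. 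The small cases ($m,n\le 2$ and $n\le 4$) already confirm both identities.
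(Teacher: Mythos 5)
Your reduction is sound as far as it goes, and I verified the details: the identification of shellings of $G$ with successive vertex orderings of $L(G)$, the characterization of successive orderings as those with a unique source, the inversion $b_1=\sum_{k\ge1}(-1)^{k-1}kA_k$, and the integral computation of $M_k$ in the i.i.d.\ uniform model are all correct. (For the rook's graph the exponents $e_i=m+n-2i$ give $\sum_{i\le l}e_i+l=l(m+n-l)$, hence $M_k/N!=\prod_{l=1}^{k}(m+n-l)^{-1}$; for the triangular graph $e_i=2n-4i$ gives $l(2n-2l-1)$, hence $M_k/N!=\prod_{l=1}^{k}(2n-2l-1)^{-1}$; both check out.) In substance this is the same inclusion--exclusion framework the paper uses: the paper works with ``bad'' events and computes their joint probability by a sequential-minimum argument, while you count sources and compute an integral, but these are packaging differences.

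The genuine gap is that you stop exactly where the difficulty of the theorem begins. By your own account, ``the entire content of the theorem is then the evaluation of these sums,'' namely
$$\sum_{k\ge1}(-1)^{k-1}k\binom mk\binom nk\,k!\,(m+n-k-1)!=m!\,n!,\qquad \sum_{k\ge1}(-1)^{k-1}k\binom n{2k}(2k-1)!!\,(2n-2k-3)!!=\frac{n!}2,$$
and you do not prove them: checking small cases and suggesting that they should follow from ``a terminating hypergeometric evaluation or \ldots\ induction or sign-reversing involution'' is not a proof. This collapse of an alternating sum into a product is precisely the step the paper devotes a separate lemma to, proving $\sum_{t=0}^{\alpha}(-1)^t\binom\alpha t\binom\beta t/\binom\gamma t=\binom{\gamma-\beta}\alpha/\binom\gamma\alpha$ via Vandermonde's identity and then feeding the fully-regular parameters into it. Your identities are true and can be closed the same way; for instance, in the bipartite case write $k\binom mk\binom nk=\frac{mn}{k}\binom{m-1}{k-1}\binom{n-1}{k-1}$ and $k!\,(m+n-1-k)!=(m+n-1)!/\binom{m+n-1}{k}$, shift $t=k-1$, and use $(t+1)\binom{m+n-1}{t+1}=(m+n-1)\binom{m+n-2}{t}$ to get $m n\,(m+n-2)!\sum_{t\ge0}(-1)^t\binom{m-1}t\binom{n-1}t/\binom{m+n-2}t$, which the paper's lemma evaluates to $mn\,(m-1)!\,(n-1)!=m!\,n!$; the complete-graph case is analogous with a half-integer lower parameter $\gamma=(2n-3)/2$ (allowed by that lemma). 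Without supplying such an evaluation, your argument reduces the theorem to an unproven identity rather than proving it.
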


The aim of the present note is to approach the above problem from a slightly different angle, by counting \emph{vertex orders} rather than edge orders.

\begin{definition}
Let $G$ be a graph with vertex set $V(G)$. A \emph{linear ordering} $\pi: V(G)\rightarrow \{1,2,\ldots,|V(G)|\}$ of $V(G)$ is said to be \emph{successive} if, for every $i\ge1$, the subgraph of $G$ induced by the vertices $v\in V(G)$ with $\pi(v)\le i$ is connected.
\end{definition}

Equivalently, $\pi$ is a successive vertex ordering if and only if for every vertex $v\in V(G)$ with $\pi(v)>1$, there is an adjacent vertex $v'\in V(G)$ with $\pi(v')<\pi(v)$.
\smallskip

Let $\sigma(G)$ denote the number of successive linear orderings of $V(G)$. In a probabilistic framework, it is often more convenient to calculate the probability $\sigma'(G)$ that a randomly and uniformly chosen linear ordering of $V(G)$ is successive. Obviously, we have $\sigma'(G)=\sigma(G)/|V(G)|!$ For an arbitrary graph $G$, usually it is hopelessly difficult to determine these parameters. We need to restrict our attention to some special classes of graphs.

A set of vertices $I\subseteq V(G)$ is \emph{independent} if no two elements of $I$ are adjacent. The size of the largest independent set in $G$ is denoted by $\alpha(G)$.

\begin{definition} A graph $G$ is called \emph{fully regular} if for an independent set $I\subseteq V(G)$, the number of vertices in $V(G)\setminus I$ not adjacent to any element of $I$ is determined by the size of $I$.
\end{definition}

Clearly, a graph $G$ is fully regular if there exist numbers $a_0, a_1,\ldots, a_{\alpha(G)}$ such that for any independent set $I\subseteq V(G)$, the number of vertices in $V(G)\setminus I$ not adjacent to any element of $I$ is $a_{|I|}$. We call the numbers $a_i$ the \emph{parameters} of the fully regular graph $G$. We must have $a_0=|V(G)|$ and $a_{\alpha(G)}=0$.
\smallskip

In Section~\ref{sec2}, we use the inclusion-exclusion principle to prove the following formula for the number of successive orderings of a fully regular graph.

\begin{theorem}\label{main}
Let $G$ be a fully regular graph with parameters $a_0,a_1,\dots,a_\alpha$, where $\alpha=\alpha(G)$. We have
$$\sigma'(G)=\sum_{i=0}^{\alpha}\prod_{j=1}^i\frac{-a_j}{a_0-a_j},$$
$$\sigma(G)=a_0!\sum_{i=0}^{\alpha}\prod_{j=1}^i\frac{-a_j}{a_0-a_j}.$$
\end{theorem}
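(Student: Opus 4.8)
The plan is to pass to the probabilistic setting and compute the distribution of the number of \emph{sources}. Draw a uniformly random ordering (equivalently, assign i.i.d.\ uniform labels to the vertices and order by increasing label), and let $X$ be the set of vertices that precede all of their neighbors. By the reformulation given in the text, a vertex lies in $X$ iff it has no earlier neighbor; hence $X$ is exactly the set of ``new starts'', the ordering is successive iff $|X|=1$, and $\sigma'(G)=\Pr[|X|=1]$. Since two adjacent vertices cannot each precede the other, $X$ is always independent; more generally, for a vertex set $S$ the event $\{S\subseteq X\}$ is empty unless $S$ is independent, so every sum below effectively ranges over independent sets only.

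Rather than attack $\Pr[|X|=1]$ directly, I would compute the binomial moments $m_i:=\mathbb{E}\binom{|X|-1}{i}$ and recover the answer from the inclusion--exclusion identity $\mathbf 1[|X|=1]=\sum_{i\ge 0}(-1)^i\binom{|X|-1}{i}$, which yields $\sigma'(G)=\sum_{i\ge0}(-1)^i m_i$. Two structural facts about a fully regular graph feed the computation. First, for every independent set $S$ the closed neighborhood $N[S]$ has size exactly $a_0-a_{|S|}$, since the $a_{|S|}$ vertices outside $S$ with no neighbor in $S$ are precisely those of $V\setminus N[S]$. Second, building an independent set one vertex at a time (there are $a_j$ ways to extend an independent set of size $j$) shows that the number of ordered independent $k$-tuples equals $\prod_{j=0}^{k-1}a_j$, so there are $\tfrac1{k!}\prod_{j=0}^{k-1}a_j$ independent $k$-sets.

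The heart of the argument, and the step I expect to be the main obstacle, is to show that for a \emph{fixed} independent set $T$ with $|T|=i$ the probability $p(T):=\Pr[\,T\subseteq X \text{ and the global minimum is not in } T\,]$ depends only on $i$. I would prove $p(T)=i!\,\tfrac{a_i}{a_0}\prod_{t=1}^{i}\frac{1}{a_0-a_t}$ by a telescoping conditioning. Fix a target increasing order $x_1<\dots<x_i$ of the labels on $T$ and condition on the smallest label inside $N[T]$: this smallest element must be $x_1$, because once all of $T$ are sources every neighbor of $T$ exceeds one of its $T$-neighbors, and among $T$ the minimum is $x_1$; this contributes a factor $1/|N[T]|=1/(a_0-a_i)$. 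After deleting $x_1$ its private neighbors impose no further constraint and may be discarded, so the residual event is the same problem for $T\setminus\{x_1\}$ on ground set $N[T\setminus\{x_1\}]$; iterating gives $\prod_{t=1}^{i}\frac{1}{a_0-a_t}$ for each of the $i!$ label-orders, while the requirement that the global minimum avoid $T$ contributes an independent factor $a_i/a_0$ (the chance that the overall minimum falls among the $a_i$ vertices of $V\setminus N[T]$). The delicate points are that this per-set probability is genuinely order-independent and size-only --- a property that fails for general graphs and here rests entirely on having $|N[S]|=a_0-a_{|S|}$ at every stage of the recursion --- and the bookkeeping that the discarded private neighbors really are inert.

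Summing $p(T)$ over the $\tfrac1{i!}\prod_{j=0}^{i-1}a_j$ independent $i$-sets collapses the factorials and gives $m_i=\prod_{j=1}^{i}\frac{a_j}{a_0-a_j}$; note that the $a_0$ cancels, shifting the product to start at $j=1$, and that $m_i=0$ once $i>\alpha$ (indeed $a_\alpha=0$ already kills the $i=\alpha$ term). Substituting into $\sigma'(G)=\sum_{i\ge0}(-1)^i m_i$ yields $\sigma'(G)=\sum_{i=0}^{\alpha}\prod_{j=1}^{i}\frac{-a_j}{a_0-a_j}$, and multiplying by $|V(G)|!=a_0!$ gives the stated formula for $\sigma(G)$. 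As a sanity check I would test it on $K_n$, where $a_1=0$ forces every term past $i=0$ to vanish so that $\sigma'=1$, and on the empty graph, where $a_k=n-k$ and the sum telescopes to $(1-1)^{n-1}=0$ for $n\ge2$.
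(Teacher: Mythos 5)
Your proposal is correct and follows essentially the same route as the paper: your binomial-moment identity $\mathbf{1}[|X|=1]=\sum_{i\ge0}(-1)^i\binom{|X|-1}{i}$ is exactly the paper's inclusion--exclusion over independent sets of ``bad'' vertices, your $p(T)$ is the paper's $\mathbb{P}(B_I)$, computed the same way (fixing the relative order on $T$, requiring each element to be the minimum of the nested closed neighborhoods and the global minimum to avoid $N[T]$, then multiplying by $i!$), and the count of independent $i$-sets by sequential extension is identical. The only difference is presentational --- telescoping conditioning in place of the paper's appeal to mutual independence of the $i+1$ events --- so there is nothing substantive to add.
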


Here and in some other formulas in this paper, we have empty products, such as $\prod_{j=1}^0a_j$. These products should be interpreted as having value $1$.

The terms corresponding to $i=\alpha$ in the sums vanish, because we have $a_\alpha=0$. Thus, the upper limit $\alpha$ in the sums can be replaced by $\alpha-1$.
\smallskip

The \emph{‌line graph} of a hypergraph $H$ is a graph whose vertex set is the set of hyperedges of $H$ and two hyperedges are adjacent if and only if their intersection is nonempty \cite{Berge}, \cite{Bermond}.
\smallskip

It is easy to see that the line graph of every \emph{complete} $r$-uniform hypergraph and every \emph{complete $r$-partite} $r$-uniform hypergraph is fully regular, for any integer $r\ge 2$.

We can generalize these examples as follows. Fix a sequence $d_1,\dots,d_t$ of positive integers, let $d=\sum_{j=1}^td_j$, and let $V_1,\ldots,V_t$ be pairwise disjoint sets.  Consider the $d$-uniform hypergraph $H$ on the vertex set $V=\cup_{j=1}^tV_j$, consisting of all hyperedges $e$ such that $|e\cap V_j|=d_j$ for every $j$. The number of hyperedges of $H$ is $\prod_{j=1}^t\binom{|V_j|}{d_j}$. We claim that the line graph $L(H)$ of $H$ is fully regular. To see this, take an independent set $I$ of size $i$ in $L(H)$. Obviously, all hyperedges of $H$ which correspond to a vertex of $L(H)$ that does not intersect any hyperedge in $I$ form a complete uniform hypergraph on a smaller number of vertices. The number of these hyperedges (vertices of $L(H)$) is $a_i:=\prod_{j=1}^t\binom{|V_j|-id_j}{d_j}$. This number depends only on $i=|I|$, proving that $L(H)$ is fully regular.
\smallskip

The case $d=2$, where $H$ is a \emph{graph} (2-uniform hypergraph) is especially interesting, because a successive vertex ordering of its ‌line graph $L(H)$ is the same as a \emph{shelling} of $H$. Unfortunately, such a direct connection fails to hold for $d>2$.
\smallskip

For $d=2$, we have two possibilities: (i) the case $t=1$, $d_1=2$ yields complete graphs $H=K_n$; (ii) the case $t=2$, $d_1=d_2=1$ yields complete bipartite graphs $H=K_{m,n}$ for some $m$ and $n$. In case (i), we have that $L(H)$ is fully regular with parameters $a_i=\binom{n-2i}2$, for every $0\le i\le\lfloor n/2\rfloor=\alpha(K_n)$. In case (ii), we obtain $a_i=(m-i)(n-i)$ for every $0\le i\le\min(m,n)=\alpha(K_{m,n})$. A direct application of Theorem~\ref{main} to $L(H)$ gives

\begin{corollary}\label{thm1'}
{\bf (i)}
The number of shellings of the complete graph $K_n$ on $n\ge2$ vertices is
$$\binom{n}{2}!\cdot \sum_{i=0}^{\lfloor n/2\rfloor}\prod_{j=1}^i\frac{1}{1-\binom{n}{2}/\binom{n-2j}{2j}}.$$
{\bf (ii)}
The number of shellings of the complete bipartite graph $K_{m,n}$ with $m\ge1$ and $n\ge 1$ vertices in its classes is
$$(mn)! \cdot \sum_{i=0}^{\min(m,n)}\prod_{j=1}^i\frac{1}{1-mn/((m-j)(n-j))}.$$
\end{corollary}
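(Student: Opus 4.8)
The plan is to read this corollary off Theorem~\ref{main} as a direct specialization, assembling three short ingredients: an identification of shellings of a graph $H$ with successive vertex orderings of its line graph $L(H)$; the fully regular parameters of $L(H)$ in the two cases at hand; and a one-line algebraic rewriting of the product appearing in Theorem~\ref{main}.

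First I would spell out the identification already announced in the text, namely that for a graph $H$ a shelling of $H$ is the same thing as a successive vertex ordering of $L(H)$. The vertices of $L(H)$ are the edges of $H$, adjacent exactly when they share an endpoint, so an ordering of $V(L(H))$ is literally an ordering of $E(H)$. The content is the connectivity equivalence: a nonempty set $S$ of edges of $H$ spans a connected subgraph of $H$ if and only if the corresponding vertices induce a connected subgraph of $L(H)$. Indeed, if the spanned subgraph of $H$ is connected, then any two of its edges are joined by a walk whose consecutive edges meet, yielding a path in $L(H)$; conversely a path in $L(H)$ between two edges is a chain of edges meeting successively, forcing connectivity in $H$. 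Applying this to every initial segment of an ordering shows that the shellings of $H$ are precisely the successive orderings of $L(H)$, whence the number of shellings of $H$ equals $\sigma(L(H))$.

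Next I would record the parameters of $L(H)$, which are instances of the general computation $a_i=\prod_{j=1}^t\binom{|V_j|-id_j}{d_j}$ carried out in the text. Taking $t=1$, $d_1=2$, $|V_1|=n$ gives $a_i=\binom{n-2i}{2}$ and $\alpha=\lfloor n/2\rfloor$ for $H=K_n$, so that $a_0=\binom{n}{2}$; taking $t=2$, $d_1=d_2=1$, $|V_1|=m$, $|V_2|=n$ gives $a_i=(m-i)(n-i)$ and $\alpha=\min(m,n)$ for $H=K_{m,n}$, so that $a_0=mn$.

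Finally I would substitute into the formula $\sigma(G)=a_0!\sum_{i=0}^{\alpha}\prod_{j=1}^i\frac{-a_j}{a_0-a_j}$ of Theorem~\ref{main} after rewriting each factor as
$$\frac{-a_j}{a_0-a_j}=\frac{a_j}{a_j-a_0}=\frac{1}{1-a_0/a_j},$$
which for $K_n$ produces the ratio $\binom{n}{2}/\binom{n-2j}{2}$ and for $K_{m,n}$ the ratio $mn/((m-j)(n-j))$. There is no real obstacle in this argument; the only steps demanding a little care are the connectivity equivalence between $H$ and $L(H)$ that legitimizes counting shellings as $\sigma(L(H))$, and keeping the direction of the fraction $-a_j/(a_0-a_j)=1/(1-a_0/a_j)$ straight so that the sign and the ratio $a_0/a_j$ come out correctly in the displayed expressions.
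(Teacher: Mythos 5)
Your proposal is correct and takes essentially the same route as the paper: the paper likewise identifies shellings of $H$ with successive vertex orderings of $L(H)$, reads off the fully regular parameters $a_i=\binom{n-2i}{2}$ (for $K_n$) and $a_i=(m-i)(n-i)$ (for $K_{m,n}$) from the general formula $a_i=\prod_j\binom{|V_j|-id_j}{d_j}$, and obtains the corollary as a direct application of Theorem~\ref{main} with the same rewriting $\frac{-a_j}{a_0-a_j}=\frac{1}{1-a_0/a_j}$. Incidentally, your derivation confirms that the denominator in part (i) should be $\binom{n-2j}{2}$ (the $\binom{n-2j}{2j}$ in the displayed statement is a typo), and the only caveat--valid equally for the paper's formulation--is that the rewritten factor is undefined at $j=\alpha$ where $a_\alpha=0$, so that last term must be interpreted as vanishing, as the remark after Theorem~\ref{main} already stipulates.
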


In Section~\ref{sec3}, we prove that the summation formulas in Corollary~\ref{thm1'} are equal to the product formulas in Theorem~\ref{thm1} obtained by Richard Stanley~\cite{Stanley}, Yibo Gao and Junyao Peng \cite{GaoPeng}. Thereby, we provide alternative proofs for the latter results.

It is always interesting when a summation formula can be turned into a nice product formula. If this is possible, it often yields some deeper insights. We were able to turn the summation formula of Theorem~\ref{main} into a product formula in yet another case: applying it to line graphs of complete 3-partite 3-uniform hypergraph. In this case, we have $t=3$ and $d_1=d_2=d_3=1$. In Section~\ref{sec4}, we establish the following result.

\begin{theorem}\label{new}
Let $K_{m,n,p}$ denote the complete 3-partite 3-uniform hypergraph with $m, n,$ and $p$ elements in its vertex classes, and let $G$ denote its line graph. Set $b_i=mn+np+mp-i(m+n+p-i)$. Then the number of successive orderings of the vertices of $G$ is
$$\sigma(G)=\frac{(mnp-1)!\prod_{i=1}^{m+n+p-1}b_i}{\prod_{i=1}^{m-1}b_i\prod_{i=1}^{n-1}b_i\prod_{i=1}^{p-1}b_i}=(mnp)!\cdot\frac{\prod_{i=m}^{m+p}b_i}{mnp\prod_{i=1}^{p-1}b_i},$$
where the fractions should be evaluated disregarding all zero factors in both the numerator and the denominator.
\end{theorem}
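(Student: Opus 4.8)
The plan is to apply Theorem~\ref{main} to $G=L(K_{m,n,p})$ and then convert the resulting sum into the claimed product. The vertices of $G$ are the $mnp$ triples $(x,y,z)\in V_1\times V_2\times V_3$, an independent set of size $i$ is a set of $i$ pairwise disjoint triples, and the vertices avoiding all of them are the triples in the remaining $(m-i)\times(n-i)\times(p-i)$ box, so $G$ is fully regular with $a_i=(m-i)(n-i)(p-i)$ and $\alpha=\min(m,n,p)$. The first thing I would record is the algebraic identity
\[
mnp-(m-i)(n-i)(p-i)=i\bigl(mn+np+mp-i(m+n+p-i)\bigr)=i\,b_i ,
\]
obtained by expanding the cubic, so that $a_0-a_i=i\,b_i$. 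Substituting into Theorem~\ref{main} gives $\sigma(G)=(mnp)!\,S$ with $S=\sum_{i=0}^{\alpha}\prod_{j=1}^{i}\tfrac{-(m-j)(n-j)(p-j)}{j\,b_j}$, and the task reduces to showing that $S$ equals $\tfrac1{mnp}$ times the displayed ratio of $b$-products. I would also record at the outset the reflection symmetry $b_i=b_{m+n+p-i}$ (immediate from $b_i=i^2-(m+n+p)i+(mn+np+mp)$) together with the special values $b_m=np$, $b_n=mp$, $b_p=mn$; the symmetry is exactly what reconciles the two product expressions in the statement, since it turns $\prod_{i=1}^{n-1}b_i$ into $\prod_{i=m+p+1}^{m+n+p-1}b_i$.

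The summand is a terminating hypergeometric term, but the series is \emph{not} Saalschützian: its lower parameters sum to $2-(m+n+p)$, whereas a balanced series would require $4-(m+n+p)$. Hence no classical single-sum evaluation applies off the shelf, and the clean product is a genuinely special identity, reflecting that the two lower parameters are tied to $m,n,p$ through their elementary symmetric functions. I would therefore prove the identity by induction on one variable. By the full symmetry of both sides in $m,n,p$ it suffices to induct on $p$. The base case $p=1$ is transparent: every triple of $K_{m,n,1}$ contains the unique vertex of $V_3$, so $L(K_{m,n,1})=K_{mn}$ is complete, every ordering is successive, and $S=1$; the product side also equals $1$, most easily from the second displayed form, which reduces to $(mn)!\,b_m b_{m+1}/(mn)=(mn)!$ because $b_m=n$ and $b_{m+1}=b_n=m$ when $p=1$. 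What makes $p$ a convenient induction variable is that $b_j(p)=(m+n-j)p+(m-j)(n-j)$ is \emph{linear} in $p$, and that the top term $i=\alpha$ always vanishes (a factor $(m-\alpha)(n-\alpha)(p-\alpha)$ is zero), which will control the boundary contributions.

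For the inductive step I would read off from the conjectured product $P(m,n,p)$ the rational function $C(p):=P(m,n,p)/P(m,n,p-1)$ (an explicit ratio of shifted $b$-products, simplifiable via $b_i=b_{m+n+p-i}$), and prove the matching first-order recurrence $S(m,n,p)=C(p)\,S(m,n,p-1)$ for the sum. Writing $T_i(p)=\prod_{j\le i}\tfrac{-(m-j)(n-j)(p-j)}{j\,b_j(p)}$, this I would establish by creative telescoping: exhibit an explicit rational certificate $R(i,p)$ with
\[
T_i(p)-C(p)\,T_i(p-1)=R(i+1,p)\,T_{i+1}(p-1)-R(i,p)\,T_i(p-1),
\]
so that summing over $i$ telescopes the right-hand side to boundary terms that vanish (since $T_i=0$ once one of the factors $m-i$, $n-i$, $p-i$ does), yielding $S(m,n,p)=C(p)\,S(m,n,p-1)$ and completing the induction once $C(p)$ is checked to be $P(m,n,p)/P(m,n,p-1)$.

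The main obstacle is precisely the construction and verification of the certificate $R$. Because the factors $b_j$ have irrational roots in $j$, the term $T_i(p)$ is not a \emph{proper} hypergeometric term in the textbook sense, so one cannot simply invoke Zeilberger's algorithm; instead $R$ must be found by solving the associated Gosper-type linear system directly, after which the telescoping relation is checked as a polynomial identity in $i$ (having cleared the denominators $j\,b_j(p)$). A secondary technical point is the degeneracy flagged in the statement: for some integer triples $m,n,p$ certain $b_i$ vanish, and the product only makes sense after cancelling matching zero factors. I would handle this by first proving the identity for $m,n,p$ treated as indeterminates (equivalently, for generic real values, where all $b_i\neq 0$) as an identity of rational functions; the prescribed ``disregard the zero factors'' evaluation is then exactly the specialization of this rational-function identity to integer points, and the reflection symmetry $b_i=b_{m+n+p-i}$ guarantees that the vanishing factors occur in cancelling numerator/denominator pairs.
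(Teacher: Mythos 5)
Your setup matches the paper's: you apply Theorem~\ref{main} with $a_i=(m-i)(n-i)(p-i)$, record $a_0-a_i=i\,b_i$, the symmetry $b_i=b_{m+n+p-i}$, the values $b_m=np$, $b_n=mp$, $b_p=mn$, reduce the two displayed products to one another, and reduce the theorem to a single summation identity; your choice to induct on $p$ and your base case $p=1$ are also sound. But the inductive step has a genuine gap: the entire proof hinges on the telescoping certificate $R(i,p)$, which you never exhibit and whose existence you never establish. As you yourself point out, the summand is not a proper hypergeometric term (the $b_j$ are quadratic in $j$ with generically irrational roots), so Zeilberger/WZ theory gives no guarantee that \emph{any} rational certificate exists, much less one realizing a first-order recurrence $S(m,n,p)=C(p)\,S(m,n,p-1)$; the truth of the closed form does not by itself imply that the difference $T_i(p)-C(p)\,T_i(p-1)$ is Gosper-summable in $i$. ``Solve the associated linear system directly'' is exactly the point where the argument would succeed or fail, and it is left undone; flagging it as ``the main obstacle'' does not discharge it. Until $R$ is produced and the cleared-denominator polynomial identity verified, nothing is proven.

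For comparison, the paper's induction on $p$ avoids certificates entirely. Its key Lemma~\ref{poly}, the identity $\sum_{i=0}^{p-1}\frac{(-1)^i}{i!}\prod_{j=1}^ia_j\prod_{j=i+1}^{p-1}b_j=p\prod_{j=m+1}^{m+p-1}b_j$, is proved by noting that for fixed $m$ and $p$ both sides are polynomials in $n$ of degree at most $p-1$, so it suffices to check them at the $p$ points $n=0,1,\dots,p-1$. At $n=0$ one has $a_0=0$ and every summand collapses to $\prod_{j=1}^{p-1}b_j$; for $1\le n<p$ the vanishing of $a_n$ truncates the sum, a common factor $\prod_{j=n}^{p-1}b_j$ pulls out, and what remains is the same expression with $n$ and $p$ interchanged, to which the induction hypothesis applies (using $b_j=b_{m+n+p-j}$, $b_{m+p}=mp$, $b_{m+n}=mn$). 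Replacing your certificate hunt with this ``bounded-degree polynomials agreeing at enough integer points'' argument is precisely the ingredient your proposal is missing; your final paragraph about treating $m,n,p$ as indeterminates and cancelling zero factors in matching pairs is fine and essentially what the paper does by assuming $p\le\min(m,n)$ via symmetry.
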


We found similar product formulas for the number of successive vertex orderings of the line graph of a complete 3-uniform hypergraph $K_n^{(3)}$ (where $t=1$ and $d_1=3$), and the line graph of $K_{m,n}^{(1,2)}$ (where $t=2$, $d_1=1$, $d_2=2$) but we were unable to verify them. We state them as  conjectures in the last section, together with other open problems and remarks.

\section{Successive vertex orderings\\--Proof of Theorem~\ref{main}}\label{sec2}

In this section, we apply the inclusion-exclusion principle to establish Theorem~\ref{main}.

\begin{proof}
It is enough to prove the first formula.
Consider a uniform random linear ordering $\pi$ of $V(G)$.
For any vertex $v\in V(G)$, let $B_v$ denote the \emph{``bad''} event that $v$ is not the first vertex, but $v$ comes before all vertices adjacent to it. In other words, we have $\pi(v) \neq 1$ and $\pi(v)<\pi(v')$ for every vertex $v'$ adjacent to $v$.  Note that if two vertices, $v$ and $v'$, are adjacent, then $B_v$ and $B_{v'}$ are mutually exclusive events, i.e., we have
$$\mathbb{P}(B_{v} \wedge B_{v'})=0.$$
Indeed, the inequalities $\pi(v)<\pi(v')$ and $\pi(v')<\pi(v)$ cannot hold simultaneously. Therefore, the vertices $v$ for which a bad event occurs always form an independent set $I$. The linear order $\pi$ is successive if and only if this independent set is empty. By the inclusion-exclusion formula, we have
\begin{equation}\label{eq0}
\sigma'(G)=\sum_{i=0}^{\alpha} (-1)^i \cdot \sum_{I: |I|=i} \mathbb{P}(\bigwedge_{v \in I} B_{v}).
\end{equation}
Here, the second sum is taken over all independent sets $I$ of size $i$ in $G$. We also use the convention that empty intersection of events returns the universal event of probability $1$.
\smallskip

For a given independent set $I$, denote by $N(I)$ the \emph{neighborhood} of $I$, that is, the set of vertices either in $I$ or adjacent to at least one vertex that belongs to $I$. Clearly, we have
$$|N(I)|=|V(G)|-a_{|I|}=a_0-a_{|I|}.$$
\smallskip

We start with evaluating the probability of the event $B_I:=\bigwedge_{v \in I} B_{v}$ for an independent set $I$ of size $i$. Let $\rho$ be an enumeration of $I$, that is $I=\{\rho(1),\rho(2),\dots,\rho(i)\}$. Consider first the event $C_\rho$ that $B_I$ happens and we also have $\pi(\rho(1))<\pi(\rho(2))< \cdots <\pi(\rho(i))$.
Clearly, $C_\rho$ occurs if and only if $\pi^{-1}(1)\notin N(I)$ and $\rho(j)$ is minimal among the vertices in $N(\{\rho(j),\rho(j+1),\dots,\rho(i)\})$ for $1\le j\le i$. These $i+1$ events are mutually independent and we clearly have
$$\mathbb P(\pi^{-1}(1)\notin N(I))=\frac{|V(G)\setminus N(I)|}{|V(G)|}=\frac{a_i}{a_0},$$
$$\mathbb P(\rho(j)\hbox{ is minimal in }N(\{\rho(j),\rho(j+1),\dots,\rho(i)\}))$$$$=\frac1{|N(\{\rho(j),\rho(j+1),\dots,\rho(i)\})|}=\frac1{a_0-a_{i-j+1}}.$$
Therefore, we have
$$\mathbb P(C_\rho)=\frac{a_i}{a_0}\prod_{j=1}^i\frac1{a_0-a_j}.$$
Now the event $B_I$ is the disjoint union of the events $C_\rho$ where $\rho$ runs over the $i!$ possible enumerations of $I$, so we have
\begin{equation}\label{u}
\mathbb P(B_I)=i!\frac{a_i}{a_0}\prod_{j=1}^i\frac1{a_0-a_j}.
\end{equation}
\smallskip

As $\mathbb P(B_I)$ does not depend on the independent set $I$ beyond its size $i$, we can avaluate Equation~\ref{eq0} by simply counting the independent sets in $G$ of any given size.  The first vertex $v_1$ of an independent set can be any one of the $|V(G)|=a_0$ vertices. After choosing $v_1,\dots v_j$, the next vertex of an independent set must be outside $N(\{v_1,\dots,v_j\})$, so we have $a_j$ choices. This implies that the number of size $i$ independent sets in $G$ is
$$\frac{\prod_{j=0}^{i-1}a_j}{i!}.$$
We had to divide by $i!$, because the vertices of an independent set can be selected in an arbitrary order. Plugging this formula and Equation~\ref{u} in Equation~\ref{eq0} proves the theorem.
\end{proof}

\section{Shelling the edges of a graph\\
--Alternative proof of Theorem~\ref{thm1}}\label{sec3}



The aim of this section is to give alternative proofs of parts (i) and (ii) of Theorem~\ref{thm1}, by  establishing a common generalization of the two statements; see Theorem~\ref{together} below.
\smallskip

As explained in the Introduction, a shelling of a graph is the same as a successive ordering of the vertices of its ‌line graph. We also noted that the ‌line graphs of complete graphs and complete bipartite graphs are fully regular. Thus, we can apply Theorem~\ref{main} to obtain Corollary~\ref{thm1} for the number of shellings of $K_n$ and $K_{m,n}$. In this way, however, we obtain \emph{summation} formulas, while Theorem~\ref{thm1} gives much nicer \emph{product} formulas with low degree factors.
\smallskip

Two distinct \emph{edges} of a graph $G$ are called \emph{adjacent} if they share a vertex. Otherwise, we call them \emph{independent}. A family of pairwise independent edges is called a \emph{matching}. The \emph{matching number} $\nu(G)$ of $G$ is the size of the largest matching in $G$. Matchings in $G$ correspond to independent sets in the ‌line graph $L(G)$ of $G$, so $\nu(G)=\alpha(L(G))$.

First, we characterize all graphs whose line graphs are fully regular.

\begin{lemma}\label{obs}
The ‌line graph of a graph $G$ is fully regular if and only if every edge of $G$ is adjacent to the same number of other edges and every pair of independent edges are connected by the same number of edges.
\end{lemma}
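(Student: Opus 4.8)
The plan is to translate the statement into the language of matchings and then apply inclusion--exclusion, exploiting the fact that the members of a matching are pairwise vertex-disjoint. Since independent sets of $L(G)$ are exactly the matchings of $G$, and a vertex of $L(G)$ (an edge of $G$) is non-adjacent to all of an independent set $I$ precisely when the corresponding edge is independent from every edge of the matching $M$ represented by $I$, full regularity of $L(G)$ is equivalent to the following: for every matching $M$ of $G$, the number $f(M)$ of edges of $G$ \emph{not} in $M$ that are independent from every edge of $M$ depends only on $|M|$. For an edge $e=uv$, write $A_e$ for the set of edges of $G$ adjacent to $e$, so $|A_e|=\deg u+\deg v-2$. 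The two conditions in the lemma then read, respectively, (A) $|A_e|$ is a constant $D$ independent of $e$, and (B) $|A_e\cap A_f|$ is a constant $\lambda$ for every pair of independent edges $e,f$, since an edge adjacent to both independent edges $e,f$ is exactly one of the edges connecting them.

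For the necessity direction, I would read off (A) and (B) from the small cases of full regularity. Taking $|M|=1$, the quantity $f(\{e\})=|E(G)|-1-|A_e|$ depends only on $|M|=1$, which forces $|A_e|$ to be constant, giving (A). Taking $|M|=2$ with $M=\{e,f\}$, a short inclusion--exclusion (using that $e\notin A_f$ and $f\notin A_e$ because $e,f$ are independent) shows that the number of edges other than $e,f$ adjacent to $e$ or $f$ equals $|A_e|+|A_f|-|A_e\cap A_f|=2D-|A_e\cap A_f|$, whence $f(M)=|E(G)|-2-2D+|A_e\cap A_f|$. Full regularity makes $f(M)$ constant, and since $|E(G)|$ and $D$ are already constants this forces $|A_e\cap A_f|$ to be constant, which is (B).

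For the sufficiency direction, I would compute $f(M)$ directly for a matching $M=\{e_1,\dots,e_i\}$ by inclusion--exclusion applied to the sets $A_{e_1},\dots,A_{e_i}$. The key observation---and essentially the only real content of the proof---is that this inclusion--exclusion truncates after the second term: a single edge $g$ has only two endpoints and the matching edges are pairwise vertex-disjoint, so $g$ can be adjacent to at most two of the $e_k$, and hence every threefold intersection $A_{e_k}\cap A_{e_l}\cap A_{e_m}$ is empty. Because the $e_k$ are pairwise independent, no $e_k$ lies in any $A_{e_l}$, so the matching edges themselves produce no bookkeeping corrections. Using (A) and (B), the number of edges adjacent to at least one $e_k$ is then exactly $iD-\binom{i}{2}\lambda$, so
$$f(M)=|E(G)|-i-iD+\binom{i}{2}\lambda,$$
which depends only on $i=|M|$. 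This shows $L(G)$ is fully regular and, as a bonus, exhibits its parameters $a_i=|E(G)|-i-iD+\binom{i}{2}\lambda$.

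The main obstacle is recognizing that the inclusion--exclusion truncates at order two; once this is in hand, both directions reduce to elementary counting with the two constants $D$ and $\lambda$. I would only take care with the degenerate bookkeeping---checking that neither a matching edge nor an element of one $A_{e_k}$ is inadvertently counted in another set---but all of these verifications follow immediately from the independence of the edges in $M$.
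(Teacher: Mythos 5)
Your proof is correct and follows essentially the same route as the paper: necessity is read off from matchings of size $1$ and $2$, and sufficiency comes from counting edges adjacent to a matching, where your observation that the inclusion--exclusion truncates at order two is exactly the paper's remark that no edge is counted more than twice, yielding the same parameters $a_i=|E(G)|-i(d+1)+\binom{i}{2}\lambda$.
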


\begin{proof}
If the ‌line graph of $G$ is fully regular with parameters $a_0,a_1,\dots$, then $a_0=|E(G)|$ and for any edge $e$ of $G$, the number of edges in $G$ distinct from and not adjacent to $e$ is $a_1$. Thus, $e$ is adjacent to $d:=a_0-a_1-1$ edges. If $e$ and $e'$ are independent, then the number of edges distinct from both and not adjacent to either of them is $a_2$. Therefore, we have $a_2=a_0-2-2d+\lambda$, where $\lambda$ is the number of edges connecting $e$ and $e'$ (and, hence, adjacent to both). Thus, we have $\lambda=a_2-a_0+2d+2=a_0-2a_1+a_2$, proving the ``only if'' part of the lemma.
\smallskip

To show the ``if'' part, we assume that every edge is adjacent to $d$ other edges in $G$ and every pair of independent edges is connected by $\lambda$ edges. We need to show that the ‌line graph of $G$ is fully regular, that is, for any independent set $I$ in the ‌line graph (i.e., for any matching $I$ in $G$), the number of vertices in the ‌line graph at distance at least 2 from $I$ is determined by the size $i:=|I|$. We have $i$ vertices at distance zero. We have $d$ adjacent edges for each edge in $I$, yielding $id$ vertices at distance 1, but we have to subtract from this the edges that we counted twice. Note that no edge is counted more than twice, and that in the line graph any pair of edges that belong to $I$ have exactly $\lambda$ common neighbors. So, in the line graph there are exactly $id-\binom i2\lambda$ vertices at distance $1$ from $I$. Hence, $a_i=|E(G)|-i(d+1)+\binom i2\lambda$, which completes the proof.
\end{proof}

The main result of this section is the following.

\begin{theorem}\label{together}
Let $G$ be a graph with matching number $\nu$. Suppose that every edge of $G$ is adjacent to $d$ other edges, and  every pair of independent edges is connected by exactly $\lambda$ edges in $G$.

The number $N$ of shellings of $G$ satisfies
$$\frac N{|E(G)|!}=\frac\nu{\binom{\frac{d+1}{\lambda/2}}{\nu-1}}.$$
\end{theorem}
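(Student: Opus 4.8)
The plan is to apply Theorem~\ref{main} to the line graph $L(G)$, whose full regularity is guaranteed by Lemma~\ref{obs}, and then to convert the resulting summation formula into the claimed product. First I would record the parameters of $L(G)$ explicitly. From the proof of Lemma~\ref{obs} we have $a_0=|E(G)|$ and, for a matching $I$ of size $i$,
\[
a_i=|E(G)|-i(d+1)+\binom{i}{2}\lambda.
\]
Since $\alpha(L(G))=\nu(G)=\nu$, Theorem~\ref{main} gives
\[
\frac{N}{|E(G)|!}=\sigma'(L(G))=\sum_{i=0}^{\nu}\prod_{j=1}^{i}\frac{-a_j}{a_0-a_j}.
\]
The immediate simplification is that $a_0-a_j=j(d+1)-\binom{j}{2}\lambda=\tfrac{\lambda}{2}\,j\bigl(\tfrac{2(d+1)}{\lambda}-(j-1)\bigr)$, a quadratic in $j$ that factors nicely. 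Writing $D:=\tfrac{2(d+1)}{\lambda}$, I expect $a_0-a_j=\tfrac{\lambda}{2}\,j\,(D-j+1)$ and, after substituting $a_0=|E(G)|$, a parallel factorization of $a_j$ itself as a quadratic in $j$ with roots that can be read off from $a_j=\tfrac{\lambda}{2}(j-r_1)(j-r_2)$ for suitable $r_1,r_2$ depending on $|E(G)|,d,\lambda$.

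The heart of the argument will be showing that the telescoping ratio $\prod_{j=1}^{i}\tfrac{-a_j}{a_0-a_j}$ collapses into binomial coefficients, so that the whole alternating-looking sum becomes a single hypergeometric term. The natural route is to recognize $\sum_{i=0}^{\nu}\prod_{j=1}^{i}\tfrac{-a_j}{a_0-a_j}$ as a terminating ${}_2F_1$-type sum and apply a Chu--Vandermonde or Gauss summation identity; equivalently, one can guess that each partial product equals a ratio of binomial coefficients and verify by a telescoping/ratio check, i.e.\ confirm that the ratio of consecutive summands matches the ratio for the conjectured closed form. I would first pin down the two roots of the quadratic $a_j$ (these should be $\nu$ and $D-\nu$, up to checking, since $a_\nu=0$ forces $j=\nu$ to be a root and the product of roots is fixed by the constant term $a_0=|E(G)|$), which simultaneously identifies $|E(G)|$ in terms of $\nu,d,\lambda$ and exposes the symmetric structure that makes Vandermonde applicable.

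The step I expect to be the main obstacle is the clean evaluation of the sum to the compact form $\nu\big/\binom{D}{\nu-1}$, because the bookkeeping with the quadratic factors and the alternating signs is delicate, and one must handle the convention about disregarding zero factors exactly as in the statement of Theorem~\ref{new}: when $\lambda/2$ does not divide $d+1$ the quantity $D=(d+1)/(\lambda/2)$ need not be an integer, so $\binom{D}{\nu-1}$ must be read as the generalized binomial coefficient $\prod_{k=0}^{\nu-2}(D-k)/(\nu-1)!$. I would therefore carry out the summation formally in terms of $D$ as an indeterminate, establish the identity as an identity of rational functions in $D$ (valid for all but finitely many values and hence everywhere), and only at the end substitute $D=(d+1)/(\lambda/2)$. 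Once the hypergeometric identity is in hand, rearranging to the displayed fraction $\tfrac{\nu}{\binom{D}{\nu-1}}$ is routine, and specializing to $K_n$ (where $d=2(n-2)$, $\lambda=2$, $\nu=\lfloor n/2\rfloor$) and to $K_{m,n}$ (where $d=m+n-2$, $\lambda=1$, $\nu=\min(m,n)$) recovers the two product formulas of Theorem~\ref{thm1}.
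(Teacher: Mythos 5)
Your proposal follows essentially the same route as the paper's proof: it computes the parameters $a_j=|E(G)|-j(d+1)+\binom{j}{2}\lambda$ of the fully regular line graph via Lemma~\ref{obs}, applies Theorem~\ref{main}, factors the quadratics $a_j$ and $a_0-a_j$ in $j$, and evaluates the resulting terminating ${}_2F_1$-type sum by a Chu--Vandermonde identity with $D=(d+1)/(\lambda/2)$ treated as an indeterminate, which is precisely the paper's Lemma~\ref{lem_identity} and its polynomial interpretation of $\binom{D}{\nu-1}$. Two minor slips to fix when writing it up: the second root of $a_j$ is $D-\nu+1$ rather than $D-\nu$ (since $a_j=\tfrac{\lambda}{2}(\nu-j)(D-\nu+1-j)$), and in the final specializations the correct values are $\lambda=4$ for $K_n$ and $\lambda=2$ for $K_{m,n}$, not $2$ and $1$.
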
	

In the last theorem, we used binomial coefficients of the form $a\choose b$, where $a$ is not necessarily an integer. They should be interpreted in the usual way: as a polynomial of $a$ with degree $b$.

For $G=K_n$, $n\ge2$ the conditions of the theorem are satisfied with $\nu=\lfloor n/2 \rfloor$, $d=2n-4$, and $\lambda=4$. Therefore, Theorem~\ref{together} implies that the number $N$ of shellings of the complete graph $K_n$ satisfies
$$\frac N{{\binom{n}{2}!}}=\frac{\lfloor n/2\rfloor}{\binom{\frac{2n-3}2}{\lfloor n/2 \rfloor-1}}=\frac{2^{\lfloor n/2\rfloor-1}\lfloor n/2\rfloor!(2n-2\lfloor n/2\rfloor-1)!!}{(2n-3)!!}=\frac{n!}{2 \cdot (2n-3)!!},$$
giving an alternative proof of part (i) of Theorem~\ref{thm1}.
\smallskip

For $G=K_{m, n}$ with $1\le m\le n$, we have $\nu=m$, $d=m+n-2$, and $\lambda=2$. Hence, the number $N$ of shellings of the complete bipartite graph $K_{m,n}$ satisfies
$$\frac N{(mn)!}=\frac m{\binom{m+n-1}{m-1}}=\frac{m+n}{\binom{m+n}{m}},$$
providing an alternative proof of part (ii) of Theorem~\ref{thm1}.
\smallskip

We need the following identity for binomial coefficients.

\begin{lemma}\label{lem_identity}
Let $\alpha$ be a non-negative integer and let $\beta$ and $\gamma$ be reals.
Then we have
$$\sum_{t=0}^\alpha(-1)^t\frac{\binom\alpha t \binom\beta t}{\binom\gamma t} = \frac{\binom{\gamma-\beta}\alpha}{\binom\gamma\alpha},$$
unless $\gamma$  is a non-negative integer smaller than $\alpha$.

In the latter case, neither side of the identity is defined.
\end{lemma}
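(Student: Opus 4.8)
The plan is to fix the non-negative integer $\alpha$ and the real $\gamma$, and to regard both sides of the claimed identity as functions of the single real variable $\beta$. First I would check that, under the stated hypothesis, both sides are honest polynomials in $\beta$ of degree at most $\alpha$. Writing $\binom\beta t=\beta(\beta-1)\cdots(\beta-t+1)/t!$, the left-hand side is $\sum_{t=0}^\alpha \frac{(-1)^t\binom\alpha t}{\binom\gamma t}\binom\beta t$, a combination of polynomials in $\beta$ of degrees $0,1,\dots,\alpha$ whose coefficients are well defined precisely when $\binom\gamma t\neq0$ for all $1\le t\le\alpha$; this fails exactly when $\gamma\in\{0,1,\dots,\alpha-1\}$, i.e. in the excluded case. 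On the right-hand side, $\binom{\gamma-\beta}\alpha$ is a polynomial of degree $\alpha$ in $\beta$, and the denominator $\binom\gamma\alpha$ is a nonzero constant under the same hypothesis. Hence both sides are polynomials in $\beta$ of degree $\le\alpha$, and it suffices to verify that they agree at $\alpha+1$ distinct values of $\beta$.

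The natural test points are $\beta=\gamma-j$ for $j=0,1,\dots,\alpha$. The key simplification is the symmetry $\binom\gamma t\binom{\gamma-t}j=\binom\gamma j\binom{\gamma-j}t$, which holds as an identity of polynomials in $\gamma$ since both sides equal $\gamma(\gamma-1)\cdots(\gamma-t-j+1)/(t!\,j!)$. Rewriting $\binom{\gamma-j}t/\binom\gamma t$ as $\binom{\gamma-t}j/\binom\gamma j$ and pulling out the constant $\binom\gamma j$ (which is nonzero because $j\le\alpha$ and $\gamma$ is not excluded), the left-hand side evaluated at $\beta=\gamma-j$ becomes
$$\frac1{\binom\gamma j}\sum_{t=0}^\alpha(-1)^t\binom\alpha t\binom{\gamma-t}j.$$
Now I would invoke the standard finite-difference annihilation: for any polynomial $P$, the sum $\sum_{t=0}^\alpha(-1)^t\binom\alpha t P(t)$ vanishes when $\deg P<\alpha$ and equals $(-1)^\alpha\alpha!$ times the leading coefficient of $P$ when $\deg P=\alpha$. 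Here $P(t)=\binom{\gamma-t}j$ has degree $j$ in $t$ with leading coefficient $(-1)^j/j!$, so the sum is $0$ for $j<\alpha$ and, for $j=\alpha$, equals $(-1)^\alpha\alpha!\cdot(-1)^\alpha/\alpha!=1$, giving the value $1/\binom\gamma\alpha$.

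To finish, I would evaluate the right-hand side at the same points: $\beta=\gamma-j$ yields $\binom j\alpha/\binom\gamma\alpha$, which is likewise $0$ for $0\le j<\alpha$ and $1/\binom\gamma\alpha$ for $j=\alpha$. Thus the two degree-$\le\alpha$ polynomials agree at the $\alpha+1$ distinct points $\gamma,\gamma-1,\dots,\gamma-\alpha$, and are therefore identical, which is the asserted identity.

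The only delicate points are bookkeeping: confirming that the excluded hypothesis is precisely what keeps every denominator $\binom\gamma t$ and $\binom\gamma j$ with $t,j\le\alpha$ nonzero, and correctly extracting the leading coefficient of $\binom{\gamma-t}j$ in the single case $j=\alpha$ that produces a nonzero contribution. I expect no genuine obstacle beyond this: the strategy of a degree count plus $\alpha+1$ explicit evaluations is what makes the argument self-contained. (One could instead recognize the left-hand sum as the terminating Gauss series ${}_2F_1(-\alpha,-\beta;-\gamma;1)$ and quote the Chu--Vandermonde summation, but the polynomial argument avoids external machinery.)
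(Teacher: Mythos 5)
Your proof is correct, but it takes a genuinely different route from the paper's. The paper argues term by term: it rewrites $(-1)^t\binom\alpha t\binom\beta t/\binom\gamma t$ using the ratio identity $\binom\alpha t/\binom\gamma t=\binom{\gamma-t}{\alpha-t}/\binom\gamma\alpha$ and the negation rule $\binom uv=(-1)^v\binom{v-u-1}v$, and then collapses the resulting sum $\sum_{t=0}^\alpha\binom{\alpha-\gamma-1}{\alpha-t}\binom\beta t$ in one stroke with Vandermonde's convolution (valid for real upper arguments as a polynomial identity), finishing with one more application of negation. You instead fix $\alpha$ and $\gamma$, view both sides as polynomials of degree at most $\alpha$ in $\beta$, and verify equality at the $\alpha+1$ distinct points $\beta=\gamma-j$, $j=0,\dots,\alpha$, using the symmetry $\binom\gamma t\binom{\gamma-t}j=\binom\gamma j\binom{\gamma-j}t$ together with finite-difference annihilation of polynomials of degree less than $\alpha$. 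All your steps check out: the degree bounds, the observation that the excluded hypothesis is exactly the condition making every $\binom\gamma t$ and $\binom\gamma j$ with $t,j\le\alpha$ nonzero, the leading coefficient $(-1)^j/j!$ of $\binom{\gamma-t}j$ as a polynomial in $t$, and the evaluations of the right-hand side ($0$ for $j<\alpha$ since $\binom j\alpha=0$, and $1/\binom\gamma\alpha$ for $j=\alpha$). As for what each approach buys: the paper's computation is shorter but outsources the substance to Vandermonde's identity, whereas yours is self-contained and elementary---in effect it re-proves Chu--Vandermonde by interpolation---at the cost of a somewhat longer verification; both treat the degenerate case identically.
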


\begin{proof}
We start with the following simple equations that hold for non-negative integers $t\le\alpha$ and $v$, respectively:
$$\frac{\binom\alpha t}{\binom\gamma t}=\frac{\binom{\gamma-t}{\alpha-t}}{\binom\gamma\alpha},$$
$$\binom uv=(-1)^v\binom{v-u-1}v.$$
Using them we obtain
$$(-1)^t\frac{\binom\alpha t \binom\beta t}{\binom\gamma t}=(-1)^t\frac{\binom{\gamma-t}{\alpha-t}\binom\beta t}{\binom\gamma\alpha}=\frac{(-1)^\alpha}{\binom\gamma\alpha}\binom{\alpha-\gamma-1}{\alpha-t}\binom\beta t.$$
Using Vandermonde's identity we obtain
$$\sum_{t=0}^\alpha(-1)^t\frac{\binom\alpha t \binom\beta t}{\binom\gamma t}=\frac{(-1)^\alpha}{\binom\gamma\alpha}\sum_{t=0}^\alpha\binom{\alpha-\gamma-1}{\alpha-t}\binom\beta t=\frac{(-1)^\alpha\binom{\alpha+\beta-\gamma-1}\alpha}{\binom\gamma\alpha}=\frac{\binom{\gamma-\beta}\alpha}{\binom\gamma\alpha}.$$
\end{proof}

Now we are ready to present the proof of Theorem~\ref{together}.

\begin{proof}[Proof of Theorem~\ref{together}:]
Let $L(G)$ denote the ‌line graph of $G$. The independence number $\alpha(L(G))$ is equal to the matching number $\nu$ of $G$. By Lemma~\ref{obs}, $L(G)$ is fully regular. Its associated parameters were calculated as $a_j=|E(G)|-j(d+1)+\binom j2\lambda$, for $0\le j\le\nu$. Note that $0=a_\nu=|E(G)|-\nu(d+1)+\binom\nu2\lambda$ and, hence, $a_0=|E(G)|=\nu(d+1)-\binom\nu2\lambda$. The number $N$ of shellings of $G$ is  equal to $\sigma(L(G))$. Therefore,
\begin{eqnarray*}
\frac N{|E(G)|!}=\sigma'(L(G))&=&\sum_{i=0}^{\nu}\prod_{j=1}^i\frac{-a_j}{a_0-a_j}\\
&=&\sum_{i=0}^{\nu-1}\prod_{j=1}^i\frac{(j(d+1)-\binom j2\lambda)-(\nu(d+1)-\binom\nu2\lambda)}{j(d+1)-\binom j2\lambda}\\
&=&\sum_{i=0}^{\nu-1}(-1)^i\prod_{j=1}^i\frac{(\nu-j)(d+1-(\nu+j-1)\lambda/2)}{j(d+1-(j-1)\lambda/2)}\\
&=&\sum_{i=0}^{\nu-1}(-1)^i\prod_{j=1}^i\frac{(\nu-j)(\frac{d+1}{\lambda/2}-\nu+1-j)}{j(\frac{d+1}{\lambda/2}+1-j)}\\
&=&\sum_{i=0}^{\nu-1}(-1)^i\frac{\binom{\nu-1}i\binom{\frac{d+1}{\lambda/2}-\nu}i}{\binom{\frac{d+1}{\lambda/2}}i}\\
&=&\frac{\binom{\nu}{\nu-1}}{\binom{\frac{d+1}{\lambda/2}}{\nu-1}}=\frac\nu{\binom{\frac{d+1}{\lambda/2}}{\nu-1}},
\end{eqnarray*}
where the first line comes from Theorem~\ref{main}. The second line was obtained by substituting the values of $a_j$ as computed above and removing the vanishing summand for $i=\nu$. We used Lemma~\ref{lem_identity} to obtain the last line. This completes the proof of Theorem~\ref{together}.
\end{proof}

\section{Complete 3-uniform 3-partite hypergraphs\\
--Proof of Theorem~\ref{new}}\label{sec4}

For any positive integers $m$, $n$, and $p$, let $K_{m,n,p}$ denote the complete 3-uniform 3-partite hypergraph with $m$, $n$, and $p$ vertices in its three vertex classes, and let $G=G_{m,n,p}$ denote the line graph of $K_{m,n,p}$. As we have seen in the Introduction, $G$ is a fully regular graph with parameters $a_i=(m-i)(n-i)(p-i)$.
Theorem~\ref{main} gives a summation formula for $\sigma(G)$, the number of successive  orderings of the vertices of $G$. In this section, we prove Theorem~\ref{new}, which is a nice product formula for the same quantity.
\smallskip

We need some notation. The parameters $a_i$ of the fully regular graph $G$ are defined \emph{a priory} only for $i\le\alpha(G)=\min(m,n,p)$. However, here we define
$$a_i:=(m-i)(n-i)(p-i),$$
for all $i$. Notice that the parameters $b_i$ defined in the theorem satisfy
$$b_i=\frac{a_0-a_i}i.$$
For fixed $i$, $a_i$ and $b_i$ are multilinear polynomials of the parameters $m$, $n$, and $p$.
\smallskip


In Theorem~\ref{new}, zero factors show up if $b_i=0$ for some $i$, which may occasionally happen (for instance, in the case $m=8$, $n=p=2$, we have $b_6=0$.) Looking at the definition  of $b_i$, one can see that this happens for at most two values of $i$ (symmetric about $(m+n+p)/2$), and they must be smaller than $\max(m,n,p)$, but larger than the sum of the other two parameters among $m,$, $n$, and $p$. This is a nuisance, which can be avoided by using the second formula in the theorem and choosing $p$ not to be the single largest of the three parameters. Note also, that when zero factors do show up in one or both of the fractions in the theorem, then they have the same number of them (one or two) in both the numerator and the denominator.
\smallskip

We start with a polynomial equality.

\begin{lemma}\label{poly}
For positive integers $m$, $n$, and $p$ and the numbers $a_j$, $b_j$ depending on them, we have
$$\sum_{i=0}^{p-1}\frac{(-1)^i}{i!}\prod_{j=1}^ia_j\prod_{j=i+1}^{p-1}b_j=p\prod_{j=m+1}^{m+p-1}b_j.$$
\end{lemma}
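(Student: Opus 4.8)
The plan is to prove this as a polynomial identity in $n$ (with $m$ fixed) and to induct on $p$. The key preliminary observation is a degree count. Since $a_j=(m-j)(n-j)(p-j)$ and, with $a_0=mnp$, $b_j=(a_0-a_j)/j=n(m+p-j)+(m-j)(p-j)$, both $a_j$ and $b_j$ are \emph{linear} in $n$. Hence each summand $\frac{(-1)^i}{i!}\prod_{j=1}^i a_j\prod_{j=i+1}^{p-1}b_j$ has degree $i+(p-1-i)=p-1$ in $n$, so the left-hand side is a polynomial in $n$ of degree at most $p-1$; the right-hand side $p\prod_{j=m+1}^{m+p-1}b_j$ is a product of $p-1$ factors each linear in $n$, so it too has degree at most $p-1$. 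It therefore suffices to check the identity at the $p$ distinct values $n=0,1,\dots,p-1$. The base case $p=1$ is immediate, as both sides equal $1$.

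For the evaluation $n=0$ we have $a_0=mnp=0$, so $a_j=-jb_j$ for every $j$, and $b_j=(m-j)(p-j)$. Substituting $\prod_{j=1}^i a_j=(-1)^i i!\prod_{j=1}^i(m-j)(p-j)$ into a summand, the factors $(-1)^i$ and $i!$ cancel and the two products merge, so every summand collapses to the $i$-independent value $\prod_{j=1}^{p-1}(m-j)(p-j)$. Summing the $p$ equal terms gives $p\prod_{j=1}^{p-1}(m-j)(p-j)$, and I would then check that $\prod_{j=1}^{p-1}(m-j)(p-j)=\prod_{j=m+1}^{m+p-1}(m-j)(p-j)$ (both equal $(p-1)!\,(m-1)(m-2)\cdots(m-p+1)$ after reindexing, the two sign changes cancelling), matching the right-hand side.

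The substantial case is $n=\nu$ with $1\le\nu\le p-1$. Here $a_\nu=0$ annihilates every summand with $i\ge\nu$, truncating the left-hand side to $\sum_{i=0}^{\nu-1}$. In each surviving term the tail factors as $\prod_{j=i+1}^{p-1}b_j=\big(\prod_{j=i+1}^{\nu-1}b_j\big)\big(\prod_{j=\nu}^{p-1}b_j\big)$, and the second factor is independent of $i$, so it pulls out of the sum. Since $a_j$ and $b_j$ are symmetric functions of the three parameters, the remaining sum is exactly the left-hand side of the Lemma for the triple $(m,p,\nu)$; as $\nu<p$, the induction hypothesis evaluates it as $\nu\prod_{j=m+1}^{m+\nu-1}b_j$. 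Thus the claim at $n=\nu$ reduces to the purely multiplicative identity
$$\nu\Big(\prod_{j=\nu}^{p-1}b_j\Big)\Big(\prod_{j=m+1}^{m+\nu-1}b_j\Big)=p\prod_{j=m+1}^{m+p-1}b_j,$$
all evaluated at $n=\nu$. Using the reflection symmetry $b_j=b_{m+\nu+p-j}$ to reindex the two $m$-shifted products as $\prod_{\ell=p+1}^{\nu+p-1}b_\ell$ and $\prod_{\ell=\nu+1}^{\nu+p-1}b_\ell$, both sides acquire the common factor $\big(\prod_{\ell=\nu+1}^{p-1}b_\ell\big)\big(\prod_{\ell=p+1}^{\nu+p-1}b_\ell\big)$, and the remaining scalars agree because $\nu b_\nu=\nu mp=pm\nu=p b_p$.

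I expect the main obstacle to be the bookkeeping in this last case: truncating the sum correctly, factoring out the $i$-independent tail, recognizing the leftover sum as a genuine smaller instance of the Lemma so that the induction hypothesis applies, and then performing the reflection-symmetry reindexing that collapses the multiplicative identity to the trivial relation $\nu b_\nu=p b_p$. By contrast, the degree bound and the $n=0$ collapse are routine. Once Lemma~\ref{poly} is established, combining it with Theorem~\ref{main} and the relation $a_0-a_j=jb_j$ (which rewrites $\sigma'(G)$ as the normalized sum on the left of the Lemma) yields both product formulas of Theorem~\ref{new}.
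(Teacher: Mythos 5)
Your proposal is correct and follows essentially the same route as the paper's own proof: treat both sides as polynomials in $n$ of degree at most $p-1$, induct on $p$ by checking the values $n=0,1,\dots,p-1$, use $a_n=0$ to truncate the sum and the symmetry of $a_j,b_j$ in the three parameters to recognize the smaller instance $(m,p,n)$, then finish with the reflection symmetry $b_j=b_{m+n+p-j}$ and the special values $b_n=mp$, $b_p=mn$ (the paper phrases this last step as $b_{m+p}=mp$, $b_{m+n}=mn$, a mirror-image reindexing of your $\nu b_\nu=p\,b_p$). The only cosmetic differences are your explicit base case $p=1$ and your direct reindexing in the $n=0$ case where the paper cites the reflection symmetry.
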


\begin{proof}
Let $P(m,n,p)$ and $Q(m,n,p)$ denote the left-hand side and the right-hand side, resp., of the equation to be verified. Recall that $a_j$ and $b_j$ are multilinear polynomials of $m$, $n$, and $p$. Thus, for a fixed $p$, both $P$ and $Q$ are polynomials in $m$ and $n$, whose degree is at most $p-1$ in either variable.
\smallskip

We prove the equality $P(m,n,p)=Q(m,n,p)$ by induction on $p$. We assume that the equation holds for any positive integer less than $p$ and for every $m$ and $n$, and we will prove that it also holds for $p$. For fixed $m$ and $p$, both $P(m,n,p)$ and $Q(m,n,p)$ are polynomials in $n$ of degree less than $p$. So, it is enough to find $p$ distinct values of $n$, where these polynomials agree. We will do that for $n=0,1,\dots,p-1$.
\smallskip

In case $n=0$, we have $a_0=mnp=0$ and $b_j=(a_0-a_j)/j=-a_j/j$. For any $0\le i\le p-1$, we have
$$\frac{(-1)^i}{i!}\prod_{j=1}^ia_j\prod_{j=i+1}^{p-1}b_j=\prod_{j=1}^ib_j\prod_{j=i+1}^{p-1}b_j=\prod_{j=1}^{p-1}b_j.$$
Therefore,
$$P(m,0,p)=\sum_{i=0}^{p-1}\prod_{j=1}^{p-1}b_j=p\prod_{j=1}^{p-1}b_j=p\prod_{j=m+1}^{m+p-1}b_j=Q(m,0,p),$$
as required. Here we used that $b_j=b_{m+n+p-j}$, which is clear from our formula for $b_j$.
\smallskip

Suppose now that $1\le n<p$. We have $a_n=0$, so all terms in $P(m,n,p)$ corresponding to $i\ge n$ vanish. We can collect $\prod_{j=n}^{p-1}b_j$ from the remaining terms to obtain
$$P(m,n,p)=\prod_{j=n}^{p-1}b_j\sum_{i=0}^{n-1}\prod_{j=1}^ia_j\prod_{j=i+1}^{n-1}b_j=\prod_{j=n}^{p-1}b_jP(m,p,n),$$
where we used that permuting the parameters $m$, $n$, and $p$ (in this case, switching the roles of $n$ and $p$) has no effect on the values $a_j$ and $b_j$.
\smallskip

We have $P(m,p,n)=Q(m,p,n)$, by the induction hypothesis. This yields
\begin{eqnarray*}
P(m,n,p)&=&Q(m,p,n)\prod_{j=n}^{p-1}b_j\\
&=&n\prod_{j={m+1}}^{m+n-1}b_j\prod_{j=m+n+1}^{m+p}b_j\\
&=&n\frac{b_{m+p}}{b_{m+n}}\prod_{j=m+1}^{m+p-1}b_j\\
&=&p\prod_{j=m+1}^{m+p-1}b_j=Q(m,n,p).
\end{eqnarray*}
In the second line, we used that $b_j=b_{m+n+p-j}$ and in the last line we used the facts $b_{m+p}=mp$ and $b_{m+n}=mn.$

Since we found $p$ distinct values of $n$, for which $P(m,n,p)=Q(m,n,p)$, the two polynomials must agree for all $n$. This completes the induction step and the proof of the lemma.
\end{proof}

\begin{proof}[Proof of Theorem~\ref{new}]
Notice that the two expressions for $\sigma(G)$  in the theorem are equal. Moreover, the second fraction can be obtained from the first by cancelling equal terms. This is immediate using the symmetry $b_i=b_{m+n+p-i}$, which implies $\prod_{i=1}^{n-1}b_i=\prod_{i=m+p+1}^{m+n+p-1}b_i$. It remains to prove that $\sigma'(G_{m,n,p})=\prod_{i=m}^{m+p}b_i/(mnp\prod_{i=1}^{p-1}b_i)$ as implied by the second expression in the theorem. Using the symmetry of the first expression, we can assume without loss of generality that $p\le m$ and $p\le n$. With this assumption, there are no zero factors in the fraction to worry about, and we know that the independence number of $G$ is $p$.
\smallskip

By Theorem~\ref{main}, we have
\begin{eqnarray*}
\sigma'(G)&=&\sum_{i=0}^p\prod_{j=1}^i\frac{-a_j}{a_0-a_j}\\
&=&\sum_{i=0}^{p-1}\frac{(-1)^i}{i!}\prod_{j=1}^i\frac{a_j}{b_j}\\
&=&\frac{\sum_{i=0}^{p-1}\frac{(-1)^i}{i!}\prod_{j=1}^ia_j\prod_{j=i+1}^{p-1}b_j}{\prod_{j=1}^{p-1}b_j}.
\end{eqnarray*}
By Lemma~\ref{poly}, the numerator of the last expression can be written as $p\prod_{j=m+1}^{m+p-1}b_j$. Substituting $b_m=np$ and $b_{m+p}=mp$, the theorem follows.
\end{proof}

\section{Comments and open problems}

\noindent{\bf A.} Theorem~\ref{together} appears to be more general than the its special cases, the two parts of Theorem~\ref{thm1}. However, it is not hard to show by a case analysis that the only connected graphs with fully regular ‌line graphs are complete graphs, complete bipartite graphs, and the cycle $C_5$.
\smallskip

\noindent{\bf B.} Every $d$-uniform hypergraph $H$ (more precisely, the closure of its hyperedges under containment) can be regarded as a $(d-1)$-dimensional simplicial complex.
An enumeration $E_1,E_2,\dots, E_n$ of the hyperedges of $H$ is called a \emph{shelling} if, for every $1\le i<j\le n$, there exists $1\le k<j$ with $E_i\cap E_j\subseteq E_k\cap E_j$ and $|E_k\cap E_j|=d-1$. See \cite{Ziegler}. In the case $d=3$, this means that for every $E_j$ with $j>1$, (i) there is a hyperedge $E_k$ preceding it which meets $E_j$ in two points, and (ii) either $E_j\not\subseteq\cup_{i<j}E_i$ or there are two preceding hyperedges that meet $E_j$ in distinct point pairs. If only condition (i) is satisfied, then the ordering is called a \emph{weak shelling}.

As we have remarked in the Introduction, for $d>2$ it is not true that every successive vertex ordering of the line graph of a $d$-uniform hypergraph $H$ corresponds to a shelling of $H$. Nevertheless, every complete $d$-uniform hypergraph and every complete $d$-uniform $d$-partite hypergraph admits a shelling. It would be interesting to compute the number of shellings and the number of weak shellings of these hypergraphs, but Theorem~\ref{main} is not applicable here. Unfortunately, we have no evidence that either of these questions has an answer that can be expressed by a nice-looking summation or product formula.

Observe that every weak shelling of the complete 3-uniform 3-partite hypergraph $K_{m,n,p}$ with $m, n,$ and $p$ vertices in its classes corresponds to an arrangement of the first $mnp$ positive integers in the 3-dimensional $m\times n\times p$ matrix such that for each entry larger than $1$, there is a smaller entry in one of the 3 \emph{rows} passing through it, parallel to a side of the matrix. Therefore, counting the number of weak shellings in this case can be regarded as another natural 3-dimensional generalization of the question described at the beginning of the Introduction, which is different from the generalization given in Theorem~\ref{new}. (Note that the successive vertex orderings of the line graph of $K_{m,n,p}$, which were counted by Theorem~\ref{new}, correspond to arrangements of the first $mnp$ integers in the same matrix such that for each entry larger than $1$, there is a smaller entry in one of the 3 coordinate \emph{planes} passing through it.)

\smallskip

\noindent{\bf C.} Let $K_n^{(3)}$ stand for the complete 3-uniform hypergraph on $n$ vertices. We saw that the line graph $L(K_n^{(3)})$ is fully regular with parameters $\alpha=\lfloor n/3\rfloor$ and $a_i=\binom{n-3i}3$. Therefore, Theorem~\ref{main} gives a summation formula for $\sigma'(L(K_n^{(3)}))$. In the conjecture below, we propose a nice product formula instead.

\begin{conjecture}
$$\sigma'(L(K_n^{(3)}))=\left\lfloor\frac n3\right\rfloor\cdot\frac{\prod\limits_{k=n+1\atop3\nmid k}^{n+\lfloor n/2\rfloor-2}c_k}{\prod\limits_{k=3\atop3\mid k}^{n-3}c_k},$$
where
$$c_k=6\frac{\binom n3-\binom{n-k}3}k=3n^2-6n+2-k(3n-3-k).$$
\end{conjecture}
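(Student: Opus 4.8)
The plan is to run the argument of Section~\ref{sec4} in reverse: start from the summation formula of Theorem~\ref{main}, simplify its terms into a single quotient, and reduce the conjecture to a ``numerator identity'' that plays the role of Lemma~\ref{poly}. Applying Theorem~\ref{main} with $a_i=\binom{n-3i}3$ and $\alpha=\lfloor n/3\rfloor$, three elementary observations make the sum manageable. First, directly from $c_k=6(\binom n3-\binom{n-k}3)/k$ one gets $a_0-a_i=\tfrac i2\,c_{3i}$; since $c_k=k^2-(3n-3)k+(3n^2-6n+2)$ has discriminant $-3n^2+6n+1<0$ for $n\ge 3$, no factor $c_{3i}$ vanishes and every term is well defined. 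Second, $a_i=\binom{n-3i}3$ vanishes exactly at $i=\alpha$ and nowhere earlier, so the sum stops at $\alpha-1$. Third, the factors $(n-3j)(n-3j-1)(n-3j-2)$ for $j=1,\dots,i$ sweep out consecutive integers, so $\prod_{j=1}^i a_j=6^{-i}(n-3)!/(n-3i-3)!$. Combining these,
\[\sigma'(L(K_n^{(3)}))=\sum_{i=0}^{\alpha-1}\frac{(-1)^i}{3^i\,i!}\,\frac{(n-3)!}{(n-3i-3)!}\,\frac{1}{\prod_{j=1}^i c_{3j}}.\]

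Putting this over the common denominator $\prod_{j=1}^{\alpha-1}c_{3j}$ — which a short check on $n\bmod 3$ identifies with the denominator $\prod_{k=3,\,3\mid k}^{n-3}c_k$ of the conjecture — reduces the conjecture to the identity
\[\sum_{i=0}^{\alpha-1}\frac{(-1)^i}{3^i\,i!}\,\frac{(n-3)!}{(n-3i-3)!}\prod_{j=i+1}^{\alpha-1}c_{3j}=\left\lfloor\frac n3\right\rfloor\prod_{\substack{k=n+1\\ 3\nmid k}}^{n+\lfloor n/2\rfloor-2}c_k,\]
which is the exact analogue of Lemma~\ref{poly}. Before attacking it I would dispose of the $\lfloor n/2\rfloor$ floor using the symmetry $c_k=c_{3n-3-k}$: the upper limit $n+\lfloor n/2\rfloor-2$ sits just below the centre $k=(3n-3)/2$, and whenever that centre is an integer it is a multiple of $3$ and hence excluded by $3\nmid k$. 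Folding therefore pairs every factor with no leftover middle term, giving $\big(\prod_{k=n+1,\,3\nmid k}^{n+\lfloor n/2\rfloor-2}c_k\big)^2=\prod_{k=n+1,\,3\nmid k}^{2n-4}c_k$. Thus the awkward floor is mere bookkeeping for a perfect square, and the genuine target is the symmetric, floor-free product on the right.

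To prove the numerator identity I would imitate the induction in the proof of Lemma~\ref{poly}. That argument succeeds because $m,n,p$ play symmetric roles, so one can truncate the sum at a vanishing $a_j$ and re-enter the induction with the variables permuted. Here there is only the single parameter $n$, so I would first lift to three independent variables, replacing $a_i=\binom{n-3i}3$ by $A_i=(x-3i)(y-3i)(z-3i)$, prove a symmetric three-variable product identity for $\sum_i\prod_{j\le i}(-A_j)/(A_0-A_j)$ by induction — evaluating at integer specialisations in which one of $x,y,z$ is a multiple of $3$, so that some $A_j$ vanishes and the sum truncates to a smaller, permuted instance — and finally specialise $(x,y,z)=(n,n-1,n-2)$, checking that the result degenerates to the folded product above.

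The hard part will be precisely this lift. Unlike the genuinely three-parameter setting of Theorem~\ref{new}, the specialisation of interest forces $x,y,z$ to be three \emph{consecutive} integers, and it is not even clear that the symmetric three-variable sum has a product closed form; guessing its correct shape is the crux, and the residues (both $\lfloor n/3\rfloor$ in the prefactor and the excluded multiples of $3$) will likely split the clean polynomial bookkeeping of Lemma~\ref{poly} into cases according to $n\bmod 6$. If the lift resists, the fallback is creative telescoping: the sum is a terminating ${}_3F_2(1)$ with upper parameters $-\tfrac{n-3}3,-\tfrac{n-4}3,-\tfrac{n-5}3$ and lower parameters the two complex-conjugate roots of $c$ divided by $3$. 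A direct check shows it is neither Saalschützian (one computes $1+\sum a_s=5-n\neq 1-n=\sum b_t$) nor well-poised, so no classical evaluation such as Lemma~\ref{lem_identity} applies; nevertheless Zeilberger's algorithm will produce a recurrence in $n$, and one then verifies that the conjectured product satisfies the same recurrence together with the initial values computed above (for instance $\sigma'=1$ for $n\le 5$, $\tfrac{18}{19}$ for $n=6$, $\tfrac{27}{31}$ for $n=7$, $\tfrac{459}{664}$ for $n=9$). This route is the most likely to close the gap, at the cost of a computer-assisted step.
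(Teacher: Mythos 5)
First, the essential context: this statement is one of the paper's \emph{open conjectures} (Section 5, item C), which the authors say they could only verify numerically for $n\le 100$; the paper contains no proof, so there is nothing to compare your attempt against, and a complete argument would be new mathematics. Your preparatory reductions are correct and well executed: applying Theorem~\ref{main} with $a_i=\binom{n-3i}{3}$, the identities $a_0-a_i=\tfrac{i}{2}c_{3i}$ and $\prod_{j=1}^i a_j=6^{-i}(n-3)!/(n-3i-3)!$, the fact that $c_k$ has negative discriminant (so no factor vanishes), the identification of $\prod_{j=1}^{\alpha-1}c_{3j}$ with the conjectured denominator via a check on $n\bmod 3$, the symmetry $c_k=c_{3n-3-k}$ with the resulting folding of the right-hand side, and your sample values ($\sigma'=\tfrac{18}{19}$ for $n=6$, $\tfrac{27}{31}$ for $n=7$, $\tfrac{459}{664}$ for $n=9$) all check out. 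So you have correctly reformulated the conjecture as a ``numerator identity'' analogous to Lemma~\ref{poly}.

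The genuine gap is that this numerator identity --- the entire mathematical content of the conjecture --- is never proved, and neither of your proposed routes is carried out; both are speculative, and the first faces a concrete obstruction you pass over. Rescaling $A_i=(x-3i)(y-3i)(z-3i)=27\bigl(\tfrac x3-i\bigr)\bigl(\tfrac y3-i\bigr)\bigl(\tfrac z3-i\bigr)$ shows that your specialization $(x,y,z)=(n,n-1,n-2)$ corresponds to running the 3-partite machinery at parameters $\bigl(\tfrac n3,\tfrac{n-1}3,\tfrac{n-2}3\bigr)$, of which exactly one is an integer. But every step of the proof of Lemma~\ref{poly} uses integrality of all three parameters: the induction is on the integer $p$, the polynomials are compared at the integer points $n=0,1,\dots,p-1$, the truncation uses $a_n=0$ at an integer $n<p$, and the right-hand side $p\prod_{j=m+1}^{m+p-1}b_j$ has summation bounds that are meaningless for non-integer $m$. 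So what you need is not a ``lift'' of Lemma~\ref{poly} but a structurally different identity whose shape --- as you yourself concede --- you cannot even write down; the case split by $n\bmod 6$ you anticipate is a symptom of exactly this. The Zeilberger fallback is likewise a plan, not an argument: you exhibit no recurrence and no certificate, and your hypergeometric bookkeeping contains a small error (the lower parameters are $1-\rho/3$ and $1-\bar\rho/3$ for the roots $\rho,\bar\rho$ of $c$, so they sum to $3-n$, not $1-n$; the conclusion that the ${}_3F_2(1)$ is not Saalsch\"utzian survives, since $5-n\ne 3-n$). In short, what you have is a correct and potentially useful reformulation plus a research program; the conjecture remains open after your attempt, exactly as it is in the paper.
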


We verified this conjecture for all $n\le100$. It would be interesting to find similar product formulas for the number of successive vertex orderings of the line graphs of complete $d$-uniform or complete $d$-uniform $d$-partite hypergraphs also for $d>3$. 

\smallskip

\noindent {\bf D.} Consider the $3$-uniform ``bipartite'' hypergraph $K_{m, n}^{(1,2)}$ on the vertex set $V = V_1 \cup V_2$ with $|V_1|=m$, $|V_2|=n$, consisting of all subsets $e\subseteq V$ such that $|e \cap V_1| =1$ and $|e \cap V_2|=2$. We showed in the Introduction that its line graph is fully regular with parameters $\alpha=\min\{m, n/2\}$ and $a_i=(m-i)\binom{n-2i}{2}$. Therefore, Theorem~\ref{main} gives a summation formula for $\sigma'(L(K_{m,n}^{(1,2)}))$. As in the case $K_n^{(3)}$, we conjecture that the following product formula holds.
\begin{conjecture}
Let $d_i=(a_0-a_i)/i$, then
$$\sigma'(L(K_{m,n}^{(1,2}))=m \cdot \prod_{i=1}^{m-1} \frac{mn-\binom{m+1}{2}+\binom{i}{2}}{d_i},$$
where the fractions should be evaluated disregarding all zero factors in both the
numerator and the denominator.
\end{conjecture}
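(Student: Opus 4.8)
The plan is to reduce the conjecture to a single polynomial identity in $n$, exactly as Lemma~\ref{poly} reduces Theorem~\ref{new}, and then to attack that identity. Writing $a_i=(m-i)\binom{n-2i}2$, $a_0=m\binom n2$, $d_i=(a_0-a_i)/i$, and $f_i=mn-\binom{m+1}2+\binom i2$, Theorem~\ref{main} gives
\[
\sigma'(L(K_{m,n}^{(1,2)}))=\sum_{i=0}^{\alpha}\prod_{j=1}^i\frac{-a_j}{a_0-a_j}=\sum_{i=0}^{\alpha}\frac{(-1)^i}{i!}\prod_{j=1}^i\frac{a_j}{d_j},\qquad \alpha=\min\{m,\lfloor n/2\rfloor\}.
\]
First I would observe that, reading $a_i$ as the polynomial $(m-i)\binom{n-2i}2$, one has $a_\alpha=0$ (either $m-\alpha=0$ or $\binom{n-2\alpha}2=0$), so every summand with index $i\ge\alpha$ already contains the vanishing factor $a_\alpha$. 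Hence the upper limit may be raised from $\alpha$ to $m-1$ without changing the value, and after multiplying through by $\prod_{i=1}^{m-1}d_i$ the conjecture becomes the polynomial identity
\[
\sum_{i=0}^{m-1}\frac{(-1)^i}{i!}\prod_{j=1}^i a_j\prod_{j=i+1}^{m-1}d_j=m\prod_{i=1}^{m-1}f_i,
\]
an identity of polynomials in $n$ for each fixed positive integer $m$; the zero-factor convention in the statement is precisely what matches the cancellations on the two sides in the range $\alpha<m-1$.

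Both sides are polynomials in $n$: the right side has degree $m-1$, while each summand on the left has degree $2(m-1)$. A short computation with the top coefficients reduces their vanishing to the alternating sum $\sum_{i=0}^{m-1}(-1)^i\binom{m-1}i=0$, and in fact all but the lowest $m-1$ coefficients must cancel. The elementary strategy, imitating Lemma~\ref{poly}, would be to verify the identity at enough values of $n$, the natural choices being $n=2\ell$ and $n=2\ell+1$, where $a_\ell=0$ truncates the sum. The difficulty is that in Lemma~\ref{poly} the truncated sum was again an instance of the same $P$ in permuted variables, because $a_j$ and $b_j$ were symmetric in $m,n,p$; here $a_j=(m-j)\binom{n-2j}2$ treats $m$ and $n$ asymmetrically, so the truncated sum carries a stray factor $(m-j)/(\ell-j)$ and is \emph{not} a smaller instance of the identity. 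This loss of symmetry, the very feature that powered Theorem~\ref{new}, is the main obstacle and is presumably why the identity resisted direct verification.

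The conceptual reason it should hold is that the summand is proper hypergeometric in $i$: the ratio of consecutive terms is $-a_{i+1}/(a_0-a_{i+1})$, a rational function of $i,m,n$. Factoring $a_i=-2(i-m)(i-\tfrac n2)(i-\tfrac{n-1}2)$ and $a_0-a_i=2i(i-\rho_1)(i-\rho_2)$, where $\rho_1+\rho_2=m+n-\tfrac12$ and $\rho_1\rho_2=\tfrac14(n^2-n+4mn-2m)$, exhibits $\sigma'$ as a terminating
\[
{}_3F_2\!\left(\begin{matrix}1-m,\ 1-\tfrac n2,\ \tfrac{3-n}2\\ 1-\rho_1,\ 1-\rho_2\end{matrix};1\right).
\]
Two of the upper parameters differ by $\tfrac12$, the signature of a quadratic transformation; unfortunately the lower parameters do not (their difference is governed by $\sqrt{4m^2-8mn+4m+1}$), so the series is neither Saalschützian (its parametric excess is $-1$, not $+1$) nor reducible to a ${}_2F_1$ by an off-the-shelf quadratic transformation. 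No classical closed-form summation therefore applies verbatim, which again matches the authors' experience.

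The route I would actually pursue to a rigorous proof is Zeilberger's creative telescoping applied to this summand; the irrational $\rho_1,\rho_2$ never genuinely appear, since $(1-\rho_1)_i(1-\rho_2)_i=2^{-i}\prod_{l=1}^i d_l$ is a polynomial in $m,n,l$. This produces a linear recurrence in $n$ (or in $m$) with polynomial coefficients satisfied by $\sigma'$; one then checks that the conjectured product satisfies the same recurrence and agrees at the base cases $m=1,2$ (which one verifies directly), closing the induction. The hard part will be producing and certifying the telescoping recurrence, equivalently exhibiting the degree drop from $2(m-1)$ to $m-1$ through an explicit certificate, together with the bookkeeping needed to confirm that both sides respect the zero-factor convention in the exceptional cases where some $f_i$ or $d_i$ vanishes.
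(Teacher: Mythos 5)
This statement is one of the paper's open conjectures: the authors themselves offer no proof, only computational verification for $m,n\le 50$, so there is no ``paper proof'' to match your argument against. Judged on its own terms, your proposal is not a proof but a proof \emph{plan}, and the core of it is missing. Your reduction is sound as far as it goes: rewriting the sum from Theorem~\ref{main} with $d_j=(a_0-a_j)/j$, checking that $a_{\lfloor n/2\rfloor}=0$ and $a_m=0$ so the upper limit can be pushed to $m-1$, and clearing denominators to get the polynomial identity
$$\sum_{i=0}^{m-1}\frac{(-1)^i}{i!}\prod_{j=1}^i a_j\prod_{j=i+1}^{m-1}d_j=m\prod_{i=1}^{m-1}f_i$$
is exactly the analogue of how the paper reduces Theorem~\ref{new} to Lemma~\ref{poly} (and it checks out in small cases, e.g.\ $m=2$ gives $a_0-2a_1=4n-6=2f_1$). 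But the paper's proof of Lemma~\ref{poly} hinges on the full symmetry of $a_j$, $b_j$ in $m,n,p$, which lets the truncated sum at $n=0,1,\dots,p-1$ be recognized as a smaller instance $P(m,p,n)$ of the same identity, enabling induction. You correctly identify that this symmetry is absent here --- the truncation at $n=2\ell$ or $2\ell+1$ is \emph{not} a smaller instance of the identity --- and then every subsequent step of your plan ends in an acknowledged dead end: the ${}_3F_2$ is not Saalsch\"utzian and admits no off-the-shelf quadratic transformation, and the Zeilberger route is described but not executed. No recurrence is produced, no certificate is exhibited, the claim that the conjectured product satisfies the same recurrence is never verified, and the base cases are not worked. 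What remains is a reduction plus a diagnosis of obstacles.

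To be clear about what would count as closing the gap: you would need either (a) a replacement for the induction step of Lemma~\ref{poly} that works without the $m\leftrightarrow n$ symmetry --- for instance, finding $m$ (or $\deg$-many) substitution values of $n$ at which the truncated sum can be evaluated in closed form by some other mechanism --- or (b) an actual creative-telescoping computation for symbolic $m$ (not merely fixed numeric $m$, which is what off-the-shelf Zeilberger gives you), together with a proof that the right-hand side satisfies the resulting recurrence and a treatment of the degenerate cases where some $d_i$ vanishes. Your diagnosis of why the problem is hard is accurate and matches the authors' experience, but the conjecture remains open under your proposal.
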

We verified this conjecture for $m, n \le 50$.

\end{document}